
\documentclass{amsart}
\RequirePackage[utf8]{inputenc}
\usepackage{amsmath,amsthm,amsfonts,amssymb,amscd,amsbsy,dsfont,xfrac,hyperref}
\usepackage[all]{xy}
\usepackage{enumerate}

\newcommand{\Ric}{\operatorname{Ric}}
\newcommand{\scal}{\operatorname{scal}}
\newcommand{\Z}{\mathds Z}

\newcommand{\R}{\mathds R}
\newcommand{\C}{\mathds C}

\newcommand{\Or}{\mathsf O}
\newcommand{\g}{\mathrm g}

\newcommand{\posb}{\sec^\perp>0}
\newcommand{\sign}{\operatorname{sign}}

\allowdisplaybreaks

\hypersetup{
    pdftoolbar=true,
    pdfmenubar=true,
    pdffitwindow=false,
    pdfstartview={FitH},
    pdftitle={Four-dimensional manifolds with positive biorthogonal curvature},
    pdfauthor={Renato G. Bettiol},
    pdfsubject={AMS codes: 53C20, 53C21, 57M50, 57N13, 57R57},
    pdfkeywords={}
    pdfnewwindow=true,
    colorlinks=true, 
    linkcolor=blue,
    citecolor=blue,
    urlcolor=black,
}

\swapnumbers
\newtheorem{theorem}{Theorem}[]

\newtheorem{proposition}[theorem]{Proposition}

\newtheorem{fact}[theorem]{Fact}

\newtheorem*{mainthm}{\sc Theorem}

\theoremstyle{definition}

\theoremstyle{remark}
\newtheorem{remark}[theorem]{Remark}

\title{Four-dimensional manifolds with positive biorthogonal curvature}
\author{Renato G. Bettiol}

\address{\begin{tabular}{l}
University of Pennsylvania\\
Department of Mathematics\\
209 South 33rd St\\
Philadelphia, PA, 19104-6395, USA\\
\emph{E-mail address}: {\tt rbettiol@math.upenn.edu}
\end{tabular}
}

\allowdisplaybreaks
\numberwithin{equation}{section}
\numberwithin{theorem}{section}

\thanks{The author was partially supported by the NSF grant DMS-1209387, USA}
\subjclass[2010]{53C20, 53C21, 57M50, 57N13, 57R57}

\date{\today}

\begin{document}
\begin{abstract}
We classify, up to homeomorphisms, the closed simply-connected $4$-manifolds that admit a Riemannian metric for which averages of pairs of sectional curvatures of orthogonal planes are positive.
\end{abstract}

\maketitle

\section{Introduction}

Interactions between geometry and topology via curvature restrictions are among the most fascinating connections between fields in Mathematics. Many such interactions flourish in the realm of $4$-manifolds, as can be attested by the vast literature on the subject. However, many longstanding open questions also remain elusive, among which the
classification of closed simply-connected $4$-manifolds that admit a metric with positive sectional curvature ($\sec>0$). Conjecturally, only $S^4$ and $\C P^2$ satisfy this condition, and this is supported by compelling evidence, see~\cite{grove-wilking,hsiang-kleiner}.

As an attempt to better understand $4$-manifolds with $\sec>0$, it is natural to investigate other curvature positivity conditions. Let $(M^4,\g)$ be a Riemannian manifold and, for each plane $\sigma\subset T_pM$, denote by $\sigma^\perp$ its orthogonal plane. The \emph{biorthogonal curvature} of $\sigma$ is defined as the average of sectional curvatures
\begin{equation*}
\sec^\perp(\sigma):=\tfrac12\big(\sec(\sigma)+\sec(\sigma^\perp)\big).
\end{equation*}
The goal of this note is to classify the closed simply-connected $4$-manifolds that admit a metric with positive biorthogonal curvature ($\posb$), more precisely:

\begin{mainthm}
Let $M^4$ be a smoothable closed simply-connected topological $4$-mani\-fold. Up to endowing $M$ with different smooth structures, the following are equivalent:
\begin{enumerate}[\rm (i)]
\item $M^4$ admits a metric with $\sec^\perp>0$;
\item $M^4$ admits a metric with $\Ric>0$;
\item $M^4$ admits a metric with $\scal>0$.
\end{enumerate}
\end{mainthm}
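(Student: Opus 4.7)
The strategy is to establish the implications (i)$\Rightarrow$(iii) and (ii)$\Rightarrow$(iii) by pointwise identities, and then to reverse both by first reducing (iii) to a short list of topological types via classical four-dimensional obstruction theory, and finally realizing each type with the appropriate metric (possibly on a different smooth structure).

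For the easy direction, at any point $p\in M^4$ with $\g$-orthonormal frame $\{e_1,\dots,e_4\}$,
\begin{equation*}
\scal(p)=2\sum_{i<j}\sec(e_i\wedge e_j)=4\bigl(\sec^\perp(e_1\wedge e_2)+\sec^\perp(e_1\wedge e_3)+\sec^\perp(e_1\wedge e_4)\bigr),
\end{equation*}
so $\posb$ implies $\scal>0$. The implication $\Ric>0\Rightarrow\scal>0$ is the standard $\scal=\tr\Ric$.

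For the converse, I first use (iii) to pin down the topology of $M$. If $M$ is spin, Lichnerowicz's formula combined with the Atiyah-Singer index theorem forces $\hat A(M)=-\sign(M)/8=0$, so the intersection form $Q_M$ is even, has vanishing signature, and is therefore isomorphic to $kH$ for some $k\ge 0$. If $M$ is non-spin, Donaldson's diagonalization theorem together with the classification of indefinite odd unimodular forms yields $Q_M\cong m\langle 1\rangle\oplus n\langle -1\rangle$. Freedman's homeomorphism theorem then identifies $M$ with one of $S^4$, $\#m\,\C P^2\#n\,\overline{\C P^2}$, or $\#k\,(S^2\times S^2)$.

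It remains to endow each model with its standard smooth structure together with metrics of $\Ric>0$ and $\posb$. For $\Ric>0$, I would invoke the round $S^4$, Fubini-Study $\C P^2$, the Sha-Yang metric on $\#k(S^2\times S^2)$, and Perelman's construction on $\#m\,\C P^2\#n\,\overline{\C P^2}$. For $\posb$, the symmetric metrics on $S^4$, $\C P^2$, and $S^2\times S^2$ suffice for the building blocks, so the main remaining task --- and the principal obstacle I anticipate --- is a connected-sum procedure preserving biorthogonal positivity. Since Gromov-Lawson surgery is calibrated to scalar curvature and does not obviously preserve $\posb$, one needs a dedicated gluing, presumably through a suitably warped neck with a subsequent local perturbation, verifying pointwise that every average of two orthogonal sectional curvatures remains strictly positive throughout the connect-sum region. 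Establishing this pointwise control across the neck is, in my estimation, the main technical step of the argument.
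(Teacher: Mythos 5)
Your reduction of (iii) to the homeomorphism types $S^4$, $\#^m\,\C P^2\,\#^n\,\overline{\C P}^2$, $\#^k(S^2\times S^2)$ matches the paper, and the easy implications and the $\Ric>0$ realizations are fine. However, there are two genuine problems on the $\posb$ side. First, your claim that the ``symmetric metric'' on $S^2\times S^2$ suffices as a building block is false: for the standard product of round spheres, a mixed plane $\sigma=\operatorname{span}(e_1,f_1)$ (one direction from each factor) has $\sec(\sigma)=0$, and its orthogonal complement $\sigma^\perp=\operatorname{span}(e_2,f_2)$ is again a mixed plane with $\sec(\sigma^\perp)=0$, so $\sec^\perp(\sigma)=0$. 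Obtaining $\posb$ on $S^2\times S^2$ requires a genuine deformation of the product metric, which is the content of the author's earlier paper cited as \cite{biorthogonal}; you cannot simply point to the symmetric metric.

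Second, the connected-sum step, which you correctly identify as the main technical obstacle, is left entirely unproved: a ``suitably warped neck with a subsequent local perturbation'' is a description of the difficulty, not a solution, and pointwise control of $\sec(\sigma)+\sec(\sigma^\perp)$ across a neck is exactly what fails for naive Gromov--Lawson-type constructions. The paper resolves this by appealing to Hoelzel's surgery stability criterion: one checks that $\posb$ is defined by an open $\Or(4)$-invariant \emph{convex} cone $C_{\posb}$ of algebraic curvature operators, and that the curvature operator $R_{S^3\times\R}$ of the standard product metric on the neck model $S^3\times\R$ lies in $C_{\posb}$ (the only planes with zero curvature are those containing the $\R$-direction, whose orthogonal complements are tangent to $S^3$ and have curvature $1$). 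These two verifications, which your proposal does not contain, are what make the connected-sum argument go through; without them, or some substitute gluing argument carried out in detail, the implication (iii)$\Rightarrow$(i) is not established.
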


The equivalence between (ii) and (iii) above was established by Sha and Yang~\cite{sha-yang90}. Since scalar curvature is the sum of sectional curvatures of pairs of orthogonal planes, it follows that (i) trivially implies (iii). Thus, in order to prove the above, one must show that (iii) implies (i). The strategy to achieve this consists of constructing metrics with $\posb$ via surgery techniques on closed simply-connected $4$-manifolds homeomorphic to those that satisfy $\scal>0$. Combining the classical work of Donaldson~\cite{donaldson} and Freedman~\cite{freedman} with the well-known $\widehat A$-genus obstruction to $\scal>0$, this amounts to showing that $S^4$, $\#^m\,\C P^2\,\#^n\,\overline{\C P}^2$, and $\#^n (S^2\times S^2)$ admit metrics with $\posb$. In practical terms, this is the list of closed simply-connected $4$-manifolds that satisfy one (and hence all) conditions in the Theorem.

Regarding the building blocks of the above connected sums, the standard metrics on $S^4$ and $\C P^2$ clearly have $\posb$. Furthermore, $S^2\times S^2$ admits metrics with $\posb$ as a consequence of the main result in Bettiol~\cite{biorthogonal}, that actually yields a much more restrictive curvature positivity condition on $S^2\times S^2$. 
The last ingredient to complete the proof of the Theorem is to show that connected sums of manifolds with $\posb$ also admit metrics with $\posb$. This is established using a recent surgery stability criterion of Hoelzel~\cite{hoelzel}, see Proposition~\ref{prop:surgeryposb}.

We remark that simply-connectedness is a necessary hypothesis not only for the equivalence of (ii) and (iii), but also of (i) and (iii). Namely, the standard product metric on $S^3\times S^1$ has $\posb$ (which is essential to prove stability under connected sums), but this manifold has infinite fundamental group and hence does not admit metrics with $\Ric>0$.
We also remark that, since $\#^n\,\C P^2$, as well as $\#^n (S^2\times S^2)$, admit metrics with $\posb$ for all $n$, there is no upper bound on the total Betti number of closed $4$-manifolds with $\posb$. Thus, by the celebrated a priori bounds of Gromov, it follows that there are (many) closed simply-connected $4$-manifolds that admit metrics with $\posb$ but do not admit metrics with $\sec\geq0$. 

Positive biorthogonal curvature was first studied by Seaman~\cite{seaman3,seaman1}, who observed that manifolds with $\sfrac14$-pinched biorthogonal curvature have positive isotropic curvature (and are hence diffeomorphic to a spherical space form). More recently, curvature conditions related to $\posb$ have been studied by Bettiol~\cite{biorthogonal,thesis}, Costa~\cite{ezio}, and Costa and Ribeiro~\cite{ezio2}. Although the latter claims to contain a classification of closed $4$-manifolds with $\sec^\perp\geq0$, a result that would extend the above Theorem, no classification statements are provided. In fact, all results of \cite{ezio2} concern $4$-manifolds satisfying curvature conditions more restrictive than $\posb$.

As our classification of $4$-manifolds with $\posb$ is obtained up to \emph{homeomorphisms}, it is natural to wonder if this can be improved to \emph{diffeomorphisms}. The chief difficulty in carrying this out originates from the (rather serious) difficulty in detecting the diffeomorphism type of $4$-manifolds with $\scal>0$, since the Donaldson and Seiberg-Witten invariants vanish on such $4$-manifolds.

This note is organized as follows. In Section~\ref{sec:psc}, we recall the classification of closed simply-connected $4$-manifolds with $\scal>0$. The key result that $\posb$ is preserved under connected sums is proved in Section~\ref{sec:connected}, completing the proof of the Theorem. The results in this paper comprise part of the author's PhD thesis~\cite{thesis}.

\noindent
\subsection*{Acknowledgement} It is a pleasure to thank my PhD advisor, Karsten Grove, for his thorough and invaluable support along the years, as well as David Wraith for helpful conversations on the surgery stability of curvature positivity conditions.

\section{\texorpdfstring{Four-dimensional manifolds with $\scal>0$}{Four-dimensional manifolds with positive scalar curvature}}\label{sec:psc}

Let $M$ be a closed oriented topological $4$-manifold $M$, and denote by
\begin{equation*}
Q_M\colon H^2(M,\Z)\times H^2(M,\Z)\to\Z, \quad Q_M(\alpha,\beta):=(\alpha\cup\beta)[M],
\end{equation*}
its intersection form, where $[M]\in H_4(M,\Z)\cong\Z$ is the fundamental class of~$M$.
Recall that $Q_M$ is a unimodular symmetric bilinear form, and the intersection form of a connected sum $M_1\# M_2$ is given by $Q_{M_1\# M_2}=Q_{M_1}\oplus Q_{M_2}$.

On the one hand, Freedman~\cite{freedman} proved that every integral symmetric unimodular form $Q$ is realized as the intersection form of a closed simply-connected \emph{topological} $4$-manifold. It also follows from his work that two closed simply-connected smooth $4$-manifolds are homeomorphic if and only if their intersection forms are isomorphic. On the other hand, Donaldson~\cite{donaldson} proved that the only definite forms that can be realized as intersection forms of a \emph{smooth} $4$-manifold are the 
standard diagonal forms $\oplus^m (1)$ and $\oplus^m (-1)$.
Thus, by Serre's classification of indefinite forms, it follows that
a smoothable closed simply-connected topological $4$-manifold must be
homeomorphic to either $S^4$, $\#^m\,\C P^2\,\#^n\,\overline{\C P}^2$, or $\#^{\pm m} M_{E_8}\,\#^n (S^2\times S^2)$, see \cite{donaldson-book}.

Among the above, $S^4$ and $\#^{\pm m} M_{E_8}\,\#^n (S^2\times S^2)$ are spin manifolds. By a classical result of Lichnerowicz, spin manifolds with $\scal>0$ do not admit nontrivial harmonic spinors; and, by the Atiyah-Singer Index Theorem, this implies the vanishing of the $\widehat A$-genus in dimensions multiple of $4$, see \cite{lawson-book}.
Moreover, on a $4$-manifold $M$, the $\widehat A$-genus can be computed in terms of its signature $\sign Q_M=b^+_2(M)-b^-_2(M)$. Namely, the Hirzebruch Signature Theorem yields
\begin{equation*}
\widehat A(M)=-\tfrac18\sign Q_M.
\end{equation*}
Therefore, if $M$ is a closed spin $4$-manifold that admits a metric with $\scal>0$, then $\sign Q_M=0$. Since $\sign Q_{M_1\#M_2}=\sign Q_{M_1}+\sign Q_{M_2}$, $\sign Q_{E_8}=8$ and $\sign Q_{S^2\times S^2}=0$, one concludes the following:

\begin{fact}\label{thm:4manifoldspsc}
A closed simply-connected $4$-manifold that satisfies $\scal>0$ is homeomorphic to either $S^4$, $\#^m\,\C P^2\,\#^n\,\overline{\C P}^2$, or $\#^n (S^2\times S^2)$.
\end{fact}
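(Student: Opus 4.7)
The plan is to combine the topological classification of smoothable closed simply-connected $4$-manifolds (already assembled in the paragraph preceding the statement from the work of Freedman, Donaldson, and Serre) with the $\widehat{A}$-genus obstruction to positive scalar curvature on spin $4$-manifolds. Concretely, let $M$ be a closed simply-connected $4$-manifold admitting a metric of positive scalar curvature. Then $M$ is smoothable (it already carries a smooth structure), so by the topological trichotomy recalled above, $M$ is homeomorphic to one of $S^4$, $\#^m\,\C P^2\,\#^n\,\overline{\C P}^2$, or $\#^{\pm m} M_{E_8}\,\#^n (S^2\times S^2)$. The task is to exclude the third family whenever $m \neq 0$.

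Next I would argue that the $M_{E_8}$-containing manifolds in the third family are spin, so the Lichnerowicz--Atiyah--Singer machinery applies: the positive scalar curvature assumption forces the space of harmonic spinors to be trivial, and the Atiyah--Singer Index Theorem then gives $\widehat A(M) = 0$. Combined with the Hirzebruch Signature Theorem, which on a closed oriented $4$-manifold reads $\widehat A(M) = -\tfrac{1}{8}\sign Q_M$, this yields $\sign Q_M = 0$.

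Finally, I would compute signatures additively: since $\sign Q_{M_1 \# M_2} = \sign Q_{M_1} + \sign Q_{M_2}$, $\sign Q_{M_{E_8}} = 8$, and $\sign Q_{S^2 \times S^2} = 0$, a manifold of the form $\#^{\pm m} M_{E_8}\,\#^n (S^2\times S^2)$ has signature $\pm 8m$. Imposing $\sign Q_M = 0$ forces $m = 0$, so $M$ is homeomorphic to $\#^n (S^2 \times S^2)$ in this case. Together with the two other families from the topological classification, this gives exactly the three listed models.

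I do not expect a real obstacle here: every input (Freedman's classification, Donaldson's diagonalizability theorem, Serre's classification of indefinite forms, Lichnerowicz vanishing, and Hirzebruch signature) is a citable black box, and the argument is simply a bookkeeping of signatures. The only subtlety worth double-checking is the spin assertion for the building blocks, namely that $S^4$, $M_{E_8}$, and $S^2 \times S^2$ are all spin (equivalently, have even intersection form), while $\C P^2$ and $\overline{\C P}^2$ are not, so that the $\widehat A$-obstruction cleanly isolates the third family.
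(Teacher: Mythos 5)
Your proposal is correct and follows essentially the same route as the paper: the Freedman--Donaldson--Serre trichotomy for smoothable simply-connected $4$-manifolds, followed by the Lichnerowicz--Atiyah--Singer vanishing of $\widehat A$ and the Hirzebruch identity $\widehat A(M)=-\tfrac18\sign Q_M$ to force $\sign Q_M=0$ and hence eliminate the $M_{E_8}$ summands. The spin verification you flag as the one point to double-check is exactly the step the paper also records (the $E_8$ form and $Q_{S^2\times S^2}$ are even, while $Q_{\C P^2}$ is odd), so there is no gap.
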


Conversely, all the above $4$-manifolds are known to admit metrics with $\scal>0$,\newline when endowed with their standard smooth structure. We remark that some of them, such as $\C P^2\#^8 \overline{\C P}^2$, also carry smooth structures without any metrics with $\scal>0$, see \cite{lebrun-catanese}.
Finally, recall that any $M^4=S^4\#^m\,\C P^2\,\#^n\,\overline{\C P}^2\#^p (S^2\times S^2)$ is homeomorphic to one of the manifolds in Fact~\ref{thm:4manifoldspsc}, since $\C P^2\#(S^2\times S^2)$ is diffeomorphic to $\#^2\,\C P^2\,\#\,\overline{\C P}^2$, see \cite{donaldson-book}.

\section{\texorpdfstring{Connected sums of manifolds with $\posb$}{Connected sums of manifolds with positive biorthogonal curvature}}\label{sec:connected}

\begin{proposition}\label{prop:surgeryposb}
If $M_1$ and $M_2$ have metrics with $\posb$, then so does~$M_1\# M_2$.
\end{proposition}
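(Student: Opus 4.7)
The plan is to apply the surgery stability criterion of Hoelzel~\cite{hoelzel}, which states that any curvature condition given by an open, convex, $O(n)$-invariant cone $\mathcal{C}$ in the space $\mathcal{R}(\R^n)$ of algebraic curvature tensors is preserved under surgeries of suitable codimension, provided that $\mathcal{C}$ contains the curvature of a suitable Riemannian product $S^{n-k-1}\times\R^{k+1}$. Since connected sum in dimension four is a $0$-surgery, the relevant model is $S^3\times\R$, and it suffices to verify (a) that $\posb$ is a condition of the type to which Hoelzel's criterion applies, and (b) that $S^3\times\R$ satisfies $\posb$.

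For (a), note that for each fixed $\sigma\in\Gr(\R^4)$ the map $R\mapsto\sec^\perp_R(\sigma)$ is a linear functional on $\mathcal{R}(\R^4)$, so the set of $R$ with $\sec^\perp_R(\sigma)>0$ for every $\sigma\in\Gr(\R^4)$ is a convex cone; it is $O(4)$-invariant by the geometric nature of sectional curvature, and open because $\Gr(\R^4)$ is compact.

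For (b), fix $p\in S^3\times\R$ and decompose $T_p(S^3\times\R)=V_1\oplus V_2$ with $V_1=T_pS^3$ and $V_2=T_p\R$. The induced decomposition $\wedge^2T_p(S^3\times\R)=\wedge^2V_1\oplus(V_1\otimes V_2)$ is an orthogonal sum of two $3$-dimensional subspaces, which the Hodge star $\ast$ of the oriented $4$-dimensional space $T_p(S^3\times\R)$ exchanges isometrically. For $\sigma\in\Gr(T_p(S^3\times\R))$ represented by a unit simple bivector $\omega=\omega_1+\omega_2$ in this decomposition, the product structure (together with $\sec_{S^3}\equiv 1$ and flatness of $\R$) gives $\sec(\sigma)=|\omega_1|^2$; since $\ast\omega$ represents $\sigma^\perp$ and $\ast$ swaps the two summands, $\sec(\sigma^\perp)=|\omega_2|^2$. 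Hence
\begin{equation*}
\sec^\perp(\sigma)=\tfrac12\big(|\omega_1|^2+|\omega_2|^2\big)=\tfrac12>0
\end{equation*}
uniformly on $S^3\times\R$, and Hoelzel's criterion then yields the conclusion. The verification in (a) is essentially formal once the linearity of $R\mapsto\sec^\perp_R(\sigma)$ is noted; the computation in (b) is the substantive step, and the main obstacle I anticipate is matching orientation and normalization conventions to the precise form in which Hoelzel states his hypotheses on the model product.
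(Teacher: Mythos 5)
Your proposal is correct and follows essentially the same route as the paper: both reduce the statement to Hoelzel's Theorem~B by checking that $\posb$ is cut out by an open convex $\Or(4)$-invariant cone and that $R_{S^3\times\R}$ lies in it. The only (cosmetic) difference is in the last verification, where you compute $\sec^\perp\equiv\tfrac12$ exactly via the Hodge star, while the paper argues qualitatively that the flat planes of $S^3\times\R$ are exactly those containing the $\R$ direction, whose orthogonal complements are tangent to $S^3$.
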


\begin{proof}
The curvature condition $\posb$ corresponds to the cone
\begin{equation*}
C_{\posb}:=\Big\{R\in \mathcal C_B(\R^4): \langle R(\sigma),\sigma\rangle+\langle R(\sigma^\perp),\sigma^\perp\rangle>0 \text{ for all }\sigma\in\mathrm{Gr}_2(\R^4)
\Big\}
\end{equation*}
on the space $\mathcal C_B(\R^4)$ of algebraic curvature operators.\footnote{In other words, $(M^4,\g)$ has $\posb$ if and only if for all $p\in M$ and all linear isometries $\iota\colon\R^4\to T_pM$, the pull-back by $\iota$ of the curvature operator $R_p$ satisfies $\iota^*(R_p)\in C_{\posb}$.}
This is clearly an open $\Or(4)$-invariant convex cone.
Let $R_{S^3\times\R}$ be the curvature operator of the standard product metric on $S^{3}\times\R$. Routine arguments show that $\langle R_{S^3\times\R}(\sigma),\sigma\rangle\geq0$ for all $\sigma\in\mathrm{Gr}_2(\R^4)$ and $\langle R_{S^3\times\R}(\sigma),\sigma\rangle=0$ if and only if $\sigma$ contains the $\R$ direction.
In particular, whenever $\langle R_{S^3\times\R}(\sigma),\sigma\rangle=0$, we have that $\sigma^\perp$ is tangent to $S^3$ and hence $\langle R_{S^3\times\R}(\sigma^\perp),\sigma^\perp\rangle=1$, so $R_{S^3\times\R}\in C_{\posb}$.
The result now follows from the surgery stability criterion of Hoelzel~\cite[Thm.\ B]{hoelzel}.
\end{proof}

As discussed in the Introduction, $S^4$, $\C P^2$ and $S^2\times S^2$ are known to admit metrics with $\posb$. Therefore, by Proposition~\ref{prop:surgeryposb}, all the $4$-manifolds listed in Fact~\ref{thm:4manifoldspsc} admit metrics with $\posb$. This shows that (iii) implies (i), finishing the proof of the Theorem in the Introduction.

Using the equivariant version of the surgery stability criterion of Hoelzel~\cite{hoelzel}, the above metrics with $\posb$ on the manifolds listed in Fact~\ref{thm:4manifoldspsc} can be constructed invariant under an effective circle action, see \cite[Rem.\ 7.18]{thesis} for details. Together with the work of  Baza{\u\i}kin and Matvienko~\cite{baz-mat}, this implies that, in all items in the Theorem in the Introduction, the metric satisfying that curvature condition can be chosen to be invariant under an effective circle action.

\begin{remark}
It follows from the above that $\C P^2\#\overline{\C P}^2$, which is the nontrivial $S^2$-bundle over $S^2$, admits a metric with $\posb$. One such metric is explicitly constructed in \cite{biorthogonal}, using deformation techniques similar to those employed in the construction of metrics with $\posb$ on $S^2\times S^2$.
\end{remark}

\begin{remark}
The condition $\posb$ can be extended to higher dimensions by requiring that averages of pairs of sectional curvatures of any orthogonal planes be positive.
Connected sums of $n$-manifolds with $\posb$ also have $\posb$ as a consequence of \cite[Thm.\ B]{hoelzel}, since $R_{S^{n-1}\times\R}$ belongs to the open $\Or(n)$-invariant convex cone that generalizes the above $C_{\posb}$, see \cite{thesis} for details.
\end{remark}

\begin{remark}
By results of Seaman~\cite{seaman2} and Costa and Ribeiro~\cite{ezio2}, $S^4$ and $\C P^2$ are the only closed simply-connected $4$-manifolds with $\posb$ that can have (weakly) $\sfrac14$-pinched biorthogonal curvature, or nonnegative isotropic curvature,
or satisfy $\sec^\perp\geq\frac{\scal}{24}>0$.
\end{remark}

We conclude with a short discussion of non-simply-connected $4$-manifolds with $\posb$. It is proved in \cite{biorthogonal} that $\R P^2\times\R P^2$ has metrics with $\posb$, and, by the proof of Proposition~\ref{prop:surgeryposb}, so do $S^3\times S^1$ and $(S^3\times\R)/\Gamma$, where $\Gamma$ is a discrete cocompact group. Connected sums of such manifolds also have $\posb$ by Proposition~\ref{prop:surgeryposb}, providing several examples that exhibit a wealth of fundamental~groups.

It is an interesting open problem to determine whether \emph{any} finitely presented group can be realized as the fundamental group of a $4$-manifold with $\posb$, as is the case for $\scal>0$. An affirmative answer would follow from stability of $\posb$ under surgeries of codimension $1$, however the criterion in~\cite{hoelzel} does not apply, as $R_{S^2\times\R^2}\notin C_{\posb}$. 
This suggests that understanding whether $S^2\times T^2$ admits metrics with $\posb$ is crucial in solving the above problem.

\end{document}